\documentclass[11pt]{article}%
\usepackage{amssymb}
\usepackage{amsfonts}
\usepackage{amsmath}
\usepackage{graphicx}%
\setcounter{MaxMatrixCols}{30}
\providecommand{\U}[1]{\protect\rule{.1in}{.1in}}
\newtheorem{theorem}{Theorem}

\newtheorem{definition}[theorem]{Definition}

\newtheorem{lemma}[theorem]{Lemma}

\newtheorem{remark}[theorem]{Remark}

\newenvironment{proof}[1][Proof]{\noindent\textbf{#1.} }{\ \rule{0.5em}{0.5em}}
\begin{document}

\title{Prime order automorphisms of Klein surfaces representable by rotations of the
euclidean space}
\author{Antonio F. Costa\\UNED\\Facultad de Ciencias\\Senda del Rey, 9\\28040 Madrid\\Spain
\and Cam Van Quach Hongler\\Universit\'{e} de Gen\`{e}ve\\Section de mathematiques\\2-4, rue du Li\`{e}vre\\CH-1211 Gen\`{e}ve 64\\Switzerland}
\maketitle

\section{Introduction}

A (closed) surface embedded in the Euclidean space inherits a conformal
structure so it is a Riemann surface. This is the easiest way to present
examples of Riemann surfaces. In 1882, F. Klein asked the following question
(\cite{K1}): is every Riemann surface conformally equivalent to an embedded
surface in the Euclidean space?. The positive answer was given by A. Garsia in
1960, \cite{G}. Several generalizations have been produced in \cite{R1} and
\cite{Ko}, \cite{Ko2}, \cite{Ko3}.

In the study of moduli spaces of Riemann surfaces, the surfaces with
automorphisms play an important r\^{o}le: the branch loci for the orbifold
structure of the moduli space consists of such type of surfaces. A classical
and easy method to present examples of Riemann surfaces with automorphisms is
to consider embedded surfaces in the Euclidean space that are invariant by
isometries. R. R\"{u}edy in \cite{R2} describes the automorphisms of compact
Riemann surfaces that can be represented by rotations of the space acting on
embedded surfaces. Later, in \cite{C}, the anticonformal automorphisms of
Riemann surfaces representable by isometries on embedded Riemann surfaces were
completely characterized. Now we are interested in the presentation and
visualization of automorphisms of Riemann surfaces with boundary as embedded surfaces.

In order to consider bordered surfaces we shall use the theory of Klein
surfaces. A compact\ bordered (with non-empty boundary) orientable Klein
surface is a compact bordered and orientable surface endowed with a dianalytic
structure. Let $S$ be a bordered orientable Klein surface and $p$ a prime.
Assume that $f$ is an order $p$ automorphism of $S$. In this work we obtain
the conditions on the topological type of $(S,f)$ to be conformally equivalent
to $(S^{\prime},f^{\prime})$ where $S^{\prime}$ is a bordered orientable Klein
surface embedded in the Euclidean space and $f^{\prime}$ is the restriction to
$S^{\prime}$ of a prime order rotation. The boundary of $S^{\prime}$ is a link
in $\mathbb{R}^{3}$ and $S^{\prime}$ is a Seifert surface of $\partial
S^{\prime}$. Since the Klein structure follows from a result in \cite{Ko}, the
main difficulty for our task is the topological realization (Lemma 5).

Our results can help to visualize some automorphisms of Riemann surfaces that
are not easy to see representing such automorphisms as restrictions of
isometries of $\mathbb{S}^{4}$ or $\mathbb{R}^{4}$. In Section 3 we represent
two famous automorphisms using this method: the order seven automorphisms of
the Klein quartic and the Wiman surface.

\section{Prime order automorphisms of Klein surfaces representable by
rotations}

A bordered orientable Klein surface is a compact surface with non-empty
boundary endowed with a dianalytic structure, i.e. an equivalence class of
dianalytic atlases. A dianalytic atlas is a set of complex charts with
analytic or antianalytic transition maps (see \cite{AG}). If $S$ is an
orientable Klein surface with genus $g$ and $k$ boundary components, there is
a complex double $\widehat{S}$ of $S$ that is a Riemann surface with genus
$\widehat{g}=2g+k-1$. An automorphism $f$ of $S$ is an (auto)-homeomorphism of
$S$ such that locally on each chart of $S$ is analytic or antianalytic. If
$\widehat{g}>1$ then the automorphism $f$ has finite order. Thus we shall
assume $\widehat{g}>1$ and the orientability of the surfaces.

Let $S$ and $S^{\prime}$ be bordered orientable Klein surfaces that are
homeomorphic and let $f,f^{\prime}$ be automorphisms of $S$ and $S^{\prime} $
respectively. We say that $f$ and $f^{\prime}$ are topologically equivalent or
have the same topological type if there is a homeomorphism $h:S\rightarrow
S^{\prime}$, such that $f=h^{-1}\circ f^{\prime}\circ h$.

Two automorphims topologically equivalent have the same order and the same
number of fixed points. To avoid technical complications we shall consider
automorphisms of prime order $p>2$. In order to describe the topological types
of automorphisms of bordered orientable Klein surfaces we need two sets of invariants.

\begin{definition}
Rotation indexes for the fixed points of an automorphism. Let $S$ be a
bordered orientable Klein surface and $f$ be an automorphism of $S$ of prime
order $p$. Assume $S$ is oriented. Let $\{x_{1},...,x_{r}\}$ be the set of
fixed points of $f$. Let $D_{1}$,...,$D_{r}$ be a set of disjoint discs in $S$
such that $x_{j}\in D_{j}$. There are charts $c_{j}:D_{j}\rightarrow
\mathbb{C}$ of $S$ (sending the orientation of $D_{j}$ on the canonical
orientation of $\mathbb{C}$) such that $c_{j}\circ f\circ c_{j}^{-1}%
:c_{j}(D_{j})\rightarrow c_{j}(D_{j})$ is $z\rightarrow e^{2\pi iq_{j}/p}$,
$-p/2<q_{j}<p/2$. The set of rotation indexes $\{\epsilon2q_{1}\pi
/p,...,\epsilon2q_{r}\pi/p\}$, with $\epsilon=1$ or $-1$, is an invariant for
the topological type of $f$.
\end{definition}

\begin{definition}
Rotation indexes for the boundary components. Let $S$ be a bordered orientable
Klein surface and $f$ be an automorphism of $S$ of order $p$. Let
$\{C_{1},...,C_{k}\}$ be the set of boundary connected components of $S$.
Assume that $S$ is oriented. There are homeomorphisms $h_{j}:C_{j}%
\rightarrow\mathbb{S}^{1}=\{z\in\mathbb{C}:\left\Vert z\right\Vert =1\}$
preserving the orientation of $C_{j}$ induced by the orientation of $S$ such
that $h_{j}\circ f\circ h_{j}^{-1}:h_{j}(D_{j})\rightarrow h_{j}(D_{j})$ is
$z\rightarrow e^{2\pi iq_{j}/p}$, $-p/2<q_{j}<p/2$. The set of rotation
indexes $\{\epsilon2q_{1}\pi/p,...,\epsilon2q_{k}\pi/p\}$, with $\epsilon=1$
or $-1$, is an invariant for the topological type of $f$.
\end{definition}

The fact of $S$ be a Klein surface does not give an orientation to $S$ then
the sets $\{2q_{1}\pi/p,...,2q_{k}\pi/p\}$ and $\{-2q_{1}\pi/p,...,-2q_{k}%
\pi/p\}$ must be considered equal.

\begin{theorem}
[Classification of automorphisms of Klein surfaces \cite{Y}]Let $f$ and
$f^{\prime}$ two orientation preserving automorphisms of prime order $p$ of
two homeomorphic bordered orientable Klein surfaces. The automorphisms $f$ and
$f^{\prime}$ are topologically equivalent if and only if the set of rotation
indexes for the fixed points and the boundary components of $f$ are equal to
the set of rotation indexes for the fixed points and the boundary components
of $f^{\prime}$.
\end{theorem}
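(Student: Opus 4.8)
The plan is to prove necessity directly from the definitions and to reduce sufficiency to the classification of regular $\mathbb{Z}/p$-branched coverings of the common quotient orbifold by their monodromy, i.e.\ a Nielsen-type generating-vector argument.

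Necessity is immediate. If $h:S\to S'$ is a homeomorphism with $f=h^{-1}\circ f'\circ h$, then $h$ carries the fixed-point set of $f$ bijectively onto that of $f'$ and each boundary component of $S$ onto one of $S'$, conjugating the germ of $f$ at a fixed point (respectively the restriction of $f$ to a boundary circle) to the corresponding germ (respectively restriction) of $f'$. Since the local models $z\mapsto e^{2\pi i q/p}$ are determined up to complex conjugation by the topological conjugacy class of the germ, and $h$ may reverse orientation (there being no preferred orientation on a Klein surface), the two index multisets agree up to the global sign $\epsilon$. This is exactly the invariance already recorded in Definitions 1.1 and 1.2.

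For sufficiency, form the quotient orbifolds $\mathcal{O}=S/\langle f\rangle$ and $\mathcal{O}'=S'/\langle f'\rangle$. Because $f$ preserves orientation and has prime order $p$, each fixed point projects to a cone point of order $p$, so $\mathcal{O}$ is a bordered orientable surface with $r$ cone points of order $p$; its underlying genus $g_0$ and number of boundary circles $b_0$ are pinned down by the homeomorphism type of $S$ together with the rotation data, through the orbifold relation $\chi(S)=p\,\chi^{\mathrm{orb}}(\mathcal{O})$, where the boundary indices record the rotation of $f$ on the circles it fixes setwise together with the free $p$-orbits into which it groups the remaining ones, hence the $\mathbb{Z}/p$-monodromy along every boundary loop of $\mathcal{O}$. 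Since $S\cong S'$ and the two index sets coincide, one first checks that $\mathcal{O}$ and $\mathcal{O}'$ have the same underlying topological type and the same cone points, i.e.\ $\mathcal{O}\cong\mathcal{O}'$ as orbifolds. The action $f$ is then encoded by its monodromy, a surjection $\theta:\pi_1^{\mathrm{orb}}(\mathcal{O})\to\mathbb{Z}/p$ whose values on the loops around the cone points are the fixed-point indices $q_j$ and whose values on the boundary loops are the boundary indices; conversely every such $\theta$ reconstructs the covering and the deck transformation $f$. Thus $f$ and $f'$ are topologically equivalent if and only if there are a homeomorphism $\phi:\mathcal{O}\to\mathcal{O}'$ and an $\alpha\in\mathrm{Aut}(\mathbb{Z}/p)$ with $\theta'\circ\phi_*=\alpha\circ\theta$.

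The crux, and the step I expect to be the main obstacle, is therefore to show that two surjections $\theta,\theta'$ on $\mathcal{O}\cong\mathcal{O}'$ with the same local data (same multiset of cone-point images and the same boundary images) are equivalent under the mapping class group of $\mathcal{O}$ together with $\mathrm{Aut}(\mathbb{Z}/p)$. Here I would use that $\mathcal{O}$ has non-empty boundary, so $\pi_1^{\mathrm{orb}}(\mathcal{O})$ is a free product of the cyclic groups $\mathbb{Z}/p$ generated by the cone-point loops with a free group of rank $2g_0+b_0-1$ carried by the handles and the remaining boundary loops. Once the cone-point and boundary images of $\theta$ are fixed, the only surviving freedom lies in the images of the handle generators, and Dehn twists, handle slides, and point-pushing (braid-type) homeomorphisms of $\mathcal{O}$ act on these images so as to bring any admissible $\theta$ to a normal form depending only on the local data: this is the bordered, prime-order analogue of Nielsen's realization theorem for periodic surface maps. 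When the free rank $2g_0+b_0-1$ vanishes the monodromy is rigidly determined by the local data and there is nothing to normalize; when it is positive the handle generators supply exactly the room needed to absorb any global discrepancy, the primality of $p$ ensuring that $\mathrm{Aut}(\mathbb{Z}/p)$ acts transitively on the nontrivial values. Lifting the resulting $\phi$ through the coverings then yields the homeomorphism $h:S\to S'$ conjugating $f$ to $f'$. (Alternatively, one could pass to the complex double $\widehat{S}$, on which $\widehat{f}$ is a prime-order automorphism of a closed Riemann surface commuting with the anticonformal involution $\sigma$ defining $S$, and deduce the statement from Nielsen's classification applied $\sigma$-equivariantly, trading the bordered orbifold bookkeeping for equivariant bookkeeping on the closed double.)
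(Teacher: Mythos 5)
First, a point of context: the paper offers no proof of this statement at all --- it is quoted as a known classification result of Yokoyama \cite{Y} --- so your attempt must stand on its own, and it does not. Your necessity argument is fine, and reducing sufficiency to a statement about monodromy epimorphisms $\theta:\pi_1^{\mathrm{orb}}(\mathcal{O})\rightarrow\mathbb{Z}/p$ is the right framework, but there is a genuine error in the reduction. Your claimed equivalence ``$f$ and $f'$ are topologically equivalent if and only if $\theta'\circ\phi_{*}=\alpha\circ\theta$ for some homeomorphism $\phi$ and some $\alpha\in\mathrm{Aut}(\mathbb{Z}/p)$'' is false in the ``if'' direction: lifting such a $\phi$ produces $h:S\rightarrow S'$ with $h\circ f\circ h^{-1}=(f')^{k}$, $k=\alpha(1)$, i.e.\ it conjugates $f$ to a \emph{power} of $f'$, and distinct powers are in general not topologically equivalent, since passing to $(f')^{k}$ multiplies every rotation index by $k$. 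Concretely, take $S=S'=B_{A}$, the bordered surface of the paper's Section 3.2 (the Wiman surface minus an invariant disc about the third fixed point), and compare $f_{A}$ (fixed-point indices $\frac{2\pi}{7}\{1,1\}$, boundary index $\frac{2\pi}{7}\cdot5$) with $f_{A}^{2}$ (indices $\frac{2\pi}{7}\{2,2\}$ and $\frac{2\pi}{7}\cdot3$). Their monodromies differ by exactly such an $\alpha$ with $\phi=\mathrm{id}$, so your criterion declares them topologically equivalent; yet their index sets disagree even after a global sign change, so by your own necessity argument they are not. Only $\alpha=\pm\mathrm{id}$ is admissible, the sign corresponding to the orientation ambiguity $\epsilon$.

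This error then infects the step you yourself identify as the crux. With $\alpha$ restricted to $\pm\mathrm{id}$, everything rests on the claim that homeomorphisms of the bordered orbifold $\mathcal{O}$ act transitively on epimorphisms with prescribed cone-point and boundary images --- and that claim \emph{is} the theorem; it is precisely what \cite{Y} proves. You do not prove it: you assert that Dehn twists, handle slides and point-pushing maps bring $\theta$ to a normal form, which is a plausibility statement, not an argument, and your one concrete mechanism for absorbing discrepancies in the handle images (``the primality of $p$ ensuring that $\mathrm{Aut}(\mathbb{Z}/p)$ acts transitively on the nontrivial values'') is exactly the tool the counterexample above forbids. (Also, ``Nielsen's realization theorem'' is the wrong reference: that result concerns realizing finite subgroups of the mapping class group by homeomorphisms, not the equivalence of $\mathbb{Z}/p$-coverings; what is needed is an Edmonds/Zieschang-type transitivity argument on generating systems of the free product $\pi_1^{\mathrm{orb}}(\mathcal{O})$, carried out while keeping the boundary and cone-point images fixed.) As it stands, the proposal reduces the theorem to an equivalent unproved statement, via an intermediate equivalence that is false as stated.
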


Let $S$ be a bordered orientable surface embedded in the Euclidean space. The
Euclidean metric induces a structure of Klein surface on $S$. If $\rho$ is a
rotation in the space and $S$ is invariant by $\rho$, $\rho$ restricted to $S$
is an automorphism $\rho_{S}$ of the Klein surface $S$. If $h:S\rightarrow K$
is an isomorphism between Klein surfaces the automorphism $h\circ\rho_{S}\circ
h^{-1}$ is an \textit{automorphism representable as a rotation}.

\begin{theorem}
Let $f$ be an automorphism of prime order $p$ of a bordered orientable Klein
surface (we consider that $f$ is orientation preserving if $p=2$). The
automorphism $f$ is representable as a rotation if and only if the set of
rotation indices of fixed points of $f$ is contained in $\{\pm2q\pi/p\}$, for
some $q$ such that $0<q<p/2$.
\end{theorem}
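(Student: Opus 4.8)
The plan is to prove the two implications separately: necessity by a direct linear-algebra argument on the differential of the rotation at a fixed point, and sufficiency by an explicit equivariant construction, whose topological part (Lemma 5) is the real content and whose conformal part is supplied by \cite{Ko}.

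For necessity, suppose $f$ is representable as a rotation, so that up to Klein isomorphism $f=\rho|_{S'}$ for an embedded surface $S'\subset\mathbb{R}^{3}$ invariant under a rotation $\rho$ of order $p$ about an axis $L$. First I would observe that the fixed points of $\rho|_{S'}$ are exactly the points of $S'\cap L$, and that at such a point $x$ the differential $d\rho_{x}$ equals the linear part $R$ of $\rho$, a rotation of $\mathbb{R}^{3}$ by $2\pi q/p$ about the direction of $L$. The key step is the remark that, for an odd prime $p$, the only $R$-invariant $2$-plane is $L^{\perp}$: a plane containing $L$ cannot be invariant unless $R$ acts as $\pm 1$ on $L^{\perp}$, which never happens for odd $p$. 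Since $T_{x}S'$ is $R$-invariant, $S'$ meets $L$ transversally with $T_{x}S'=L^{\perp}$, and $R$ acts on $T_{x}S'$ as rotation by $2\pi q/p$; hence the rotation index at every fixed point equals $\pm 2q\pi/p$ for this single $q$, the sign depending only on the orientation $S'$ induces on $L^{\perp}$. For $p=2$ the orientation-preserving hypothesis keeps us within the setting of Theorem 1, and an isolated fixed point of a $\pi$-rotation has index $\pi=2\cdot 1\cdot\pi/2$. Because topological equivalence preserves the set of rotation indices, the same conclusion holds for $f$.

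For sufficiency I would argue in two stages, isolating the topological realization as the crux. Assuming every fixed-point index of $f$ lies in $\{\pm 2q\pi/p\}$, I would fix $\rho$ to be the rotation of $\mathbb{R}^{3}$ by $2\pi q/p$ about the $z$-axis and build an embedded orientable $\rho$-invariant surface $S'$, presented as a Seifert surface of the $\rho$-invariant link $\partial S'$, whose restricted automorphism $\rho|_{S'}$ has exactly the fixed-point and boundary rotation indices of $f$. The surface is assembled from $\rho$-invariant building blocks: at each prescribed fixed point one places a small horizontal disc centred on and transverse to the axis, on which $\rho$ automatically acts by rotation of angle $2\pi q/p$, the sign of the induced index being governed by the crossing direction of $S'$ through the axis (the absence of a canonical orientation on the Klein surface giving enough freedom to match the prescribed signs). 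One then realizes the desired genus by attaching handles equivariantly, either a single handle encircling the axis or a free orbit of $p$ handles off-axis, and realizes each prescribed boundary rotation index by taking the corresponding boundary component to be an invariant curve winding suitably around the axis, while free orbits of $p$ boundary curves account for the remaining components. It is essential here that the theorem constrains only the fixed-point indices: the axis crossings are rigidly forced to the single angle $2\pi q/p$, whereas the boundary curves are free to wind and so impose no constraint. Granting such an $S'$, Theorem 1 gives that $\rho|_{S'}$ and $f$ are topologically equivalent, and the result of \cite{Ko} upgrades this to a conformal equivalence, so that $S$ is Klein-isomorphic to $S'$ with $f$ carried to $\rho|_{S'}$; this is precisely the assertion that $f$ is representable as a rotation.

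The main obstacle is this topological realization (Lemma 5): producing a single embedded $\rho$-invariant Seifert surface that simultaneously has the correct genus, the correct number and signs of axis crossings, and the prescribed boundary rotation indices, all while remaining embedded in $\mathbb{R}^{3}$. The delicate points are the equivariant control of the genus and, above all, the realization of arbitrary prescribed boundary indices by invariant (possibly knotted or linked) boundary curves; the fixed-point data, by contrast, is the easy part once the single rotation angle is fixed. The passage from the topological model to a conformal one I would not belabour, since it is furnished by \cite{Ko}.
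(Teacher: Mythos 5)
Your necessity argument is sound and in fact more careful than the paper itself, which merely asserts this direction (one small omission: you rule out invariant $2$-planes \emph{containing} the axis $L$, but you must also rule out invariant planes meeting $L$ only at the origin --- for such a plane $P\neq L^{\perp}$ the line $P\cap L^{\perp}$ would be $R$-invariant, which is equally impossible for odd $p$). The genuine gap is in sufficiency. You correctly identify the topological realization (the paper's Lemma 5) as ``the main obstacle,'' but you never actually carry it out: ``attaching handles equivariantly'' and ``an invariant curve winding suitably around the axis'' name the desired pieces without any mechanism guaranteeing that the assembled surface (i) is embedded, connected and orientable, (ii) has exactly the genus and number of boundary components of $S$, and (iii) has invariant boundary curves realizing the prescribed indices $2\pi q_{j}/p$. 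These are precisely the points you yourself flag as unresolved, so the crux of the theorem is left unproven.

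The idea you are missing --- the one that makes Lemma 5 tractable --- is to build the surface \emph{downstairs} rather than upstairs. Let $\pi:\mathbb{R}^{3}\rightarrow\mathbb{R}^{3}/\left\langle \rho\right\rangle \cong\mathbb{R}^{3}$ be the quotient map. In the quotient one constructs, non-equivariantly, a surface $M$ realizing the data of the orbit surface $S/\left\langle f\right\rangle$ and of the monodromy epimorphism $\omega:\pi_{1}O(S/\left\langle f\right\rangle )\rightarrow C_{p}$: one disc per fixed point, crossing the axis once, oriented so that $Lk(\partial D_{i},Oz)=\pm1$ matches the sign of that fixed-point index; for each invariant boundary component an annulus $C_{m}$ with $Lk(C_{m}^{-},Oz)=q_{m}$ (this is the precise meaning of ``winding suitably''); an off-axis piece of genus $g$ carrying the remaining boundary circles; all joined by bands into a connected embedded surface meeting $Oz$ only at the disc centers. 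Then $\Sigma=\pi^{-1}(M)$ is automatically embedded and $\rho$-invariant, and its genus, connectivity and all rotation indices are forced by the monodromy (Riemann--Hurwitz plus the fact that $\Sigma\rightarrow M$ has the same monodromy as $S\rightarrow S/\left\langle f\right\rangle$), so $(\Sigma,\rho)$ is topologically equivalent to $(S,f)$ with no case-by-case verification --- exactly the bookkeeping your upstairs assembly cannot supply. The same quotient picture is also needed for your final step: Ko's theorem cannot be invoked equivariantly upstairs, since applied to $S$ directly it produces an embedding with no symmetry whatsoever. The paper instead applies it to the quotient surface $\Sigma/\left\langle \rho\right\rangle$, deforming it slightly (so as not to disturb its position relative to the axis) into a surface conformally equivalent to $S/\left\langle f\right\rangle$, and then lifts through $\pi^{-1}$; your one-line ``Ko upgrades this to a conformal equivalence'' conceals exactly this equivariance issue.
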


Note that there are no restriction on the rotation indices of boundary components.

If $f$ is representable as a rotation of angle $2q\pi/p$ then it is necessary
that the rotation indices of the fixed points of $f$ are $2q\pi/p$ or
$-2q\pi/p$. For the sufficiency in the Theorem first we need a topological
construction that we shall present in the following Lemma.

\begin{lemma}
[Topological construction]Let $S$ be a bordered orientable compact surface and
$h$ be an autohomeomorphism of $S$ with order $p$ where $p$ is a prime (if
$p=2$ we assume also that $h$ is orientation preserving). Assume that the
rotation index of every fixed point of $h$ is equal to $\pm\frac{2\pi q}{p}$,
where $0<q<q/2$. Hence there is a surface $\Sigma$ embedded in $\mathbb{R}%
^{3}$ such that $\Sigma$ is equivariant under a rotation $\rho$ of angle
$\frac{2\pi q}{p}$ and $(S,h)$ is topologically equivalent to $(\Sigma,\rho)$.
\end{lemma}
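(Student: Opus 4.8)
The plan is to realize the pair $(S,h)$ concretely as an embedded surface invariant under a genuine rotation of $\mathbb{R}^3$, building the surface equivariantly around the axis of rotation. Let $\rho$ denote the rotation of $\mathbb{R}^3$ about the $z$-axis through angle $2\pi q/p$. First I would analyze the quotient $S/\langle h\rangle$: since $h$ has prime order $p$ and its fixed points $\{x_1,\dots,x_r\}$ all have rotation index $\pm 2\pi q/p$, the quotient orbifold is a bordered surface with $r$ cone points of order $p$, and the covering $S \to S/\langle h\rangle$ is branched exactly over these. I would first construct a fundamental domain picture: because $h$ is orientation-preserving of order $p$, one can find a subsurface $F \subset S$ (a ``$1/p$ wedge'') such that $S = \bigcup_{i=0}^{p-1} h^i(F)$ with the pieces meeting along arcs, and $h$ acts by cyclically advancing the wedges.

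The geometric construction then proceeds by placing the $p$ copies of the wedge $F$ symmetrically around the $z$-axis. The key idea is that a rotation of angle $2\pi q/p$ with $\gcd(q,p)=1$ still generates the full cyclic group of order $p$, so I can reindex the wedges so that $\rho$ sends the wedge in angular sector $[2\pi j/p, 2\pi(j{+}1)/p]$ to the next sector; the value of $q$ only affects \emph{which} sector is the image, not the existence of the equivariant arrangement, since $q \bmod p$ is invertible. Each fixed point $x_j$ must lie on the axis, and the prescribed rotation index $\pm 2\pi q/p$ dictates the local embedding near the axis: I would embed a neighborhood of $x_j$ as a small $\rho$-invariant disc transverse to the $z$-axis, matching the $+$ or $-$ sign to the two orientations in which the disc can meet the axis (i.e.\ the two sides, agreeing or disagreeing with the orientation of the axis). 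The boundary components $C_1,\dots,C_k$ are then realized as a $\rho$-invariant link in $\mathbb{R}^3$; a boundary orbit of size $p$ becomes $p$ disjoint circles permuted by $\rho$, while a boundary component fixed setwise by $h$ (which can only happen through the rotation-index data on $\partial S$) becomes a single $\rho$-invariant circle linking the axis.

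Having placed the wedges and the fixed-point discs, I would glue the translated copies $\rho^i(F)$ along their shared boundary arcs to obtain a closed-up embedded surface $\Sigma$ that is manifestly $\rho$-invariant, and define the required homeomorphism $H : S \to \Sigma$ wedge-by-wedge, sending $F$ to the model wedge and extending by $H \circ h = \rho \circ H$; one checks this is well defined on the gluing arcs precisely because the identifications on $\partial F$ were chosen $\rho$-equivariantly. It then remains to verify that the constructed $(\Sigma,\rho)$ has the same invariants as $(S,h)$, namely the same genus, the same number $k$ of boundary components, the same rotation indices $\pm 2\pi q/p$ at the $r$ fixed points, and matching boundary rotation indices; by Theorem~3 (the classification) this forces $(S,h)$ and $(\Sigma,\rho)$ to be topologically equivalent, completing the proof.

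I expect the main obstacle to be the embedding step rather than the combinatorial quotient analysis: one must produce an \emph{actual} embedding into $\mathbb{R}^3$ (not merely an abstract equivariant surface) that is simultaneously $\rho$-invariant and realizes the correct local rotation index $\pm 2\pi q/p$ at every fixed point while keeping the $p$ wedges disjoint away from the axis. The delicate point is reconciling the sign ambiguity $\epsilon = \pm 1$ in the rotation indices with the fixed global orientation of the axis of $\rho$, and ensuring that the self-linking and nesting of the boundary link do not force unwanted intersections; handling the fixed points that sit on the axis together with the equivariant tubular-neighborhood gluing is where the care is needed. This is presumably exactly the ``topological realization'' the introduction flags as the crux of the argument.
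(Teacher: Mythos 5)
Your overall picture (cut $(S,h)$ into $p$ wedges, arrange them in angular sectors around the axis, glue equivariantly) is the right geometric intuition, but as written the argument has a genuine gap: the two steps that carry all of the difficulty are asserted rather than carried out, and your final paragraph concedes exactly this. First, the claim that one can choose a fundamental domain $F$ with $S=\bigcup_{i=0}^{p-1}h^{i}(F)$ whose pieces meet only along arcs shared with the adjacent wedge is not automatic: the side pairings of a fundamental domain for a branched $\mathbb{Z}/p$-action are given by the monodromies of the cut arcs, and for a generic cut system these are various powers of $h$ (for instance $h^{\pm q}$ at the cone points, $h^{q_{j}}$ at the boundary components), not a single ``sector-advance'' element. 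Producing a cut system all of whose crossing monodromies equal that one element is essentially equivalent to already possessing the equivariant embedded picture, so this step cannot simply be quoted. Second, you never construct the boundary circles: a boundary component of $S$ on which $h$ acts with rotation index $2\pi q_{j}/p$ must be realized as a $\rho$-invariant circle winding the correct number of times around the axis, and one must exhibit such multiply-winding circles that simultaneously bound the required surface pieces and stay disjoint from each other, from the axis, and from the rest of $\Sigma$. You flag both points as ``the delicate point'' and ``where the care is needed,'' but leave them unresolved; the proposal therefore reformulates the lemma rather than proving it.

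The idea that closes this gap in the paper is to work downstairs instead of upstairs. Since $\mathbb{R}^{3}/\langle\rho\rangle$ is again $\mathbb{R}^{3}$, with the quotient map $f$ a $p$-fold cyclic covering branched along the axis $Oz$, one embeds the quotient surface $S/\langle h\rangle$ in the quotient space, where there is no equivariance constraint at all: each branch point becomes a disc pierced transversally by the axis, the sign of its rotation index being encoded by the orientation of the piercing ($Lk(\partial D_{i},Oz)=\pm1$); a boundary component with rotation index $2\pi q_{j}/p$ becomes the free boundary $C_{j}^{-}$ of an embedded annulus with $Lk(C_{j}^{-},Oz)=q_{j}$; the genus and the boundary components on which $h$ acts trivially are placed in a piece disjoint from the axis; and all pieces are joined by $1$-handles avoiding the axis, yielding an embedded surface $M$. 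Then $\Sigma=f^{-1}(M)$ is automatically embedded and $\rho$-invariant, the linking numbers translate exactly into the prescribed rotation data at fixed points and boundaries, and the classification theorem gives the topological equivalence with $(S,h)$. Your wedge decomposition is precisely what this construction looks like after cutting $\Sigma$ along the half-planes $\theta=2\pi j/p$; but the quotient-then-lift formulation is what converts the hard equivariant embedding problem into an easy non-equivariant one about linking numbers, and that conversion is the step missing from your argument.
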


\begin{proof}
Consider $S/\left\langle h\right\rangle $ the orbit surface. Assume that
$S/\left\langle h\right\rangle $ has genus $g$ and $t+l$ boundary components
and the quotient orbifold $O(S/\left\langle h\right\rangle )$ has $r$ conic
points. Let $\omega:\pi_{1}O(S/\left\langle h\right\rangle )\rightarrow
C_{p}=\left\langle \gamma\right\rangle $ be the monodromy epimorphism of the
branched covering $S\rightarrow S/\left\langle h\right\rangle $. Corresponding
to $\omega$, there is a canonical presentation of $\pi_{1}O(S/\left\langle
h\right\rangle )$:%
\[
\pi_{1}O(S/\left\langle h\right\rangle )=\left\langle a_{i},b_{i},x_{i}%
,e_{i},c_{i}:\Pi\lbrack a_{i},b_{i}]\Pi x_{i}\Pi e_{i}=1,x_{i}^{p}%
=1,e_{i}c_{i}e_{i}^{-1}c_{i}=1,c_{i}^{2}=1\right\rangle
\]
such that: $\omega(a_{i})=\omega(b_{i})=\omega(c_{i})=1$, $\omega
(x_{j})=\gamma^{q}$, $j=1,...,n$, $\omega(x_{j})=\gamma^{p-q}$, $j=n+1,...,r$,
$\omega(e_{j})=\gamma^{q_{j}}$, $j=1,...,t$, $0<q_{i}<p,$ $\omega(e_{j})=1$,
$j=t+1,...,t+l$. That means there are $n$ fixed points of $h$ with rotation
index $\frac{2\pi q}{p}$ and $r-n$ fixed points of $h$ with rotation index
$-\frac{2\pi q}{p}$ and the rotation index of the boundary component $C_{i}$
of $S/\left\langle h\right\rangle $ corresponding to $e_{i}$ is $\frac{2\pi
q_{i}}{p}$.

Consider the usual orientation of $\mathbb{R}^{3}$ with $Oz$ oriented in the
increasing way of $z$. Consider $r$ discs $D_{i},i=1,...r$
\[
D_{i}=\left\{  (x,y,i):\left\Vert (x,y)\right\Vert \leq1\right\}
\]
Assume that $D_{i}$ has the orientation producing in $\partial D_{i}$ an
orientation such that the linking number with $Oz$ is:
\begin{align*}
Lk(\partial D_{i},Oz)  &  =1,i=1,...,n\\
Lk(\partial D_{i},Oz)  &  =-1,i=n+1,...,r
\end{align*}

Consider in
\[
R_{m}=\{(x,y,z)\in\mathbb{R}^{3}:-m-1<z<-m\}
\]
for $m\in\{1,...,t-1\}$, an embedded oriented annulus $C_{m}$ such that
$C_{m}\cap Oz=\varnothing$, $\partial C_{m}=C_{m}^{+}\cup C_{m}^{-}$ and
\[
Lk(C_{m}^{-},Oz)=q_{m}%
\]
%

\begin{figure}
[ptb]
\begin{center}
\includegraphics[
height=4.8006in,
width=3.7861in
]%
{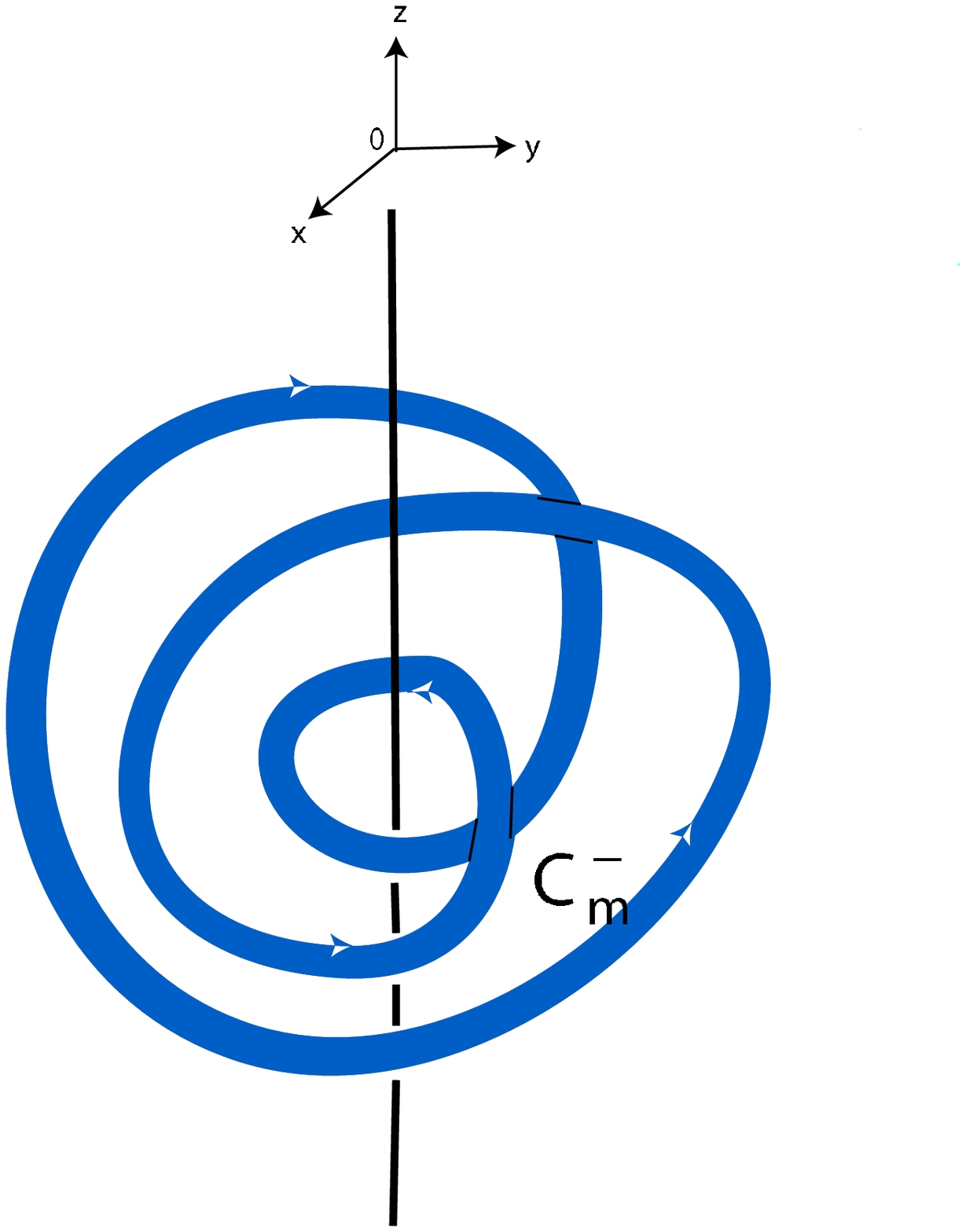}%
\caption{An annulus $C_{m}$ with $q_{m}=3$}%
\end{center}
\end{figure}

Let us connect $D_{i}$ with $D_{i+1}$ by 1-handles $b_{i}$, $C_{i}$ with
$C_{i+1}$ by 1-handles $h_{i}$ (more precisely, $C_{i}^{+}$ to $C_{i+1}^{+}$)
and $D_{1}$ with $C_{1}$ by the 1-handle $h$ (more precisely, $\partial D_{1}$
with $C_{1}^{+}$) in such a way that:%
\[
S_{0}=%
{\textstyle\bigsqcup}
D_{i},C_{i},b_{i},h_{i},h
\]
is an embedded surface homeomorphic to a sphere with $t$ holes. Furthermore
the 1-handles must connect the discs and the annuli such that $S_{0}$ has an
orientation which induces on $D_{i}$ and $C_{i}$ their desired orientations
and $S_{0}\cap Oz$ are exactly the centers of the discs $D_{i}$.%

\begin{figure}
[ptb]
\begin{center}
\includegraphics[
height=3.3027in,
width=1.593in
]%
{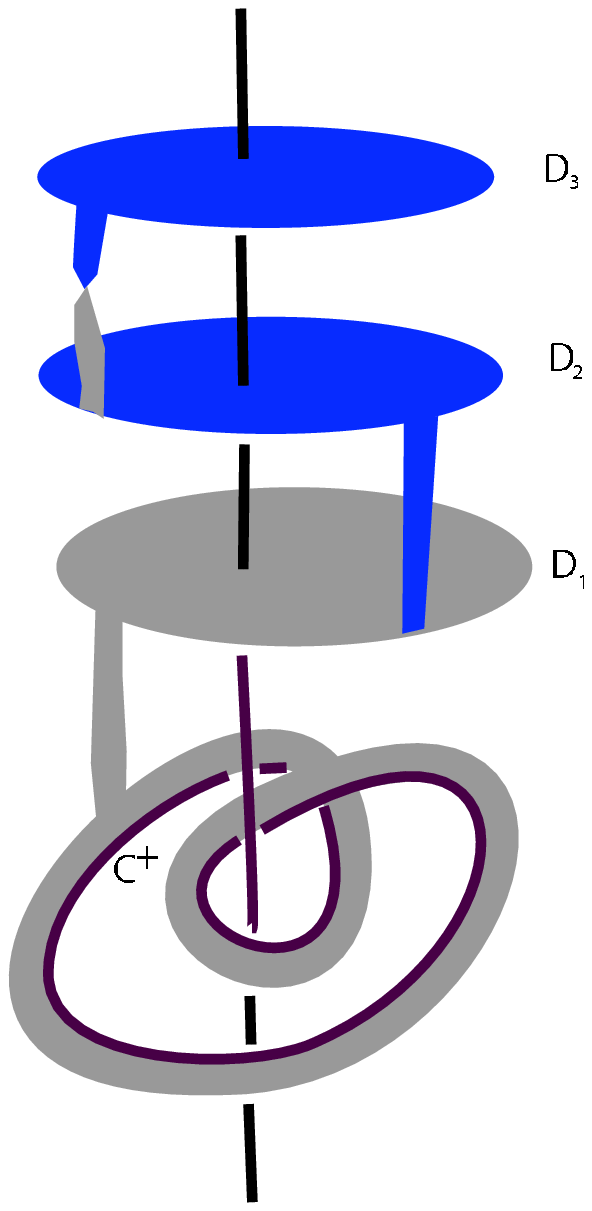}%
\caption{Example of $S_{0}$}%
\end{center}
\end{figure}

Let $F$ be a surface with the topological type of genus $g$ with $l$ boundary
components and contained in
\[
\{(x,y,z)\in\mathbb{R}^{3}:x>2\text{, }z<-t\}
\]
and $Y$ an annulus with the boundaries on the boundaries of discs $\Delta_{1}$
in $S_{0}$ and $\Delta_{2}$ in $F$, and with $Y\cap Oz=\varnothing$. The
surface $(S_{0}-\Delta_{1})\sqcup Y\sqcup(F-\Delta_{1})$ will be denoted $M$.
Let $f:\mathbb{R}^{3}\rightarrow\mathbb{R}^{3}/\left\langle \rho\right\rangle
=\mathbb{R}^{3}$ be the quotient map given by the orbits of the rotation
$\rho$ of angle $\frac{2\pi q}{p}$ around the axis $Oz$. The surface $\Sigma$
that we are looking for is $f^{-1}(M)$. Note that each boundary component
$C_{j}^{-}$ of $C_{j}$ remains as a boundary component of $M$, and since
$Lk(C_{j}^{-},Oz)=q_{j}$, the action of the rotation $\rho$ on $f^{-1}%
(C_{j}^{-})$ is equivalent to the action of $z\rightarrow e^{2\pi iq_{j}/p}$
on the unit cicle in complex plane.
\end{proof}

\begin{proof}
[Proof of the Theorem]Let $(\Sigma,\rho)$ be a topological realization of
$(S,h)$ given by the Lemma above. Using \cite{Ko} we can deform $\Sigma
/\left\langle \rho\right\rangle $ to an embedded surface $\Sigma^{\prime}$
that is conformally equivalent to $S/\left\langle h\right\rangle $, and since
the deformation can be chosen very close to $\Sigma/\left\langle
\rho\right\rangle $, we can consider that $\Sigma^{\prime}\cap OZ=\varnothing
$. If $f:\mathbb{R}^{3}\rightarrow\mathbb{R}^{3}/\left\langle \rho
\right\rangle =\mathbb{R}^{3}$, $f^{-1}(\Sigma^{\prime})$ is the surface that
we are looking for.
\end{proof}

\begin{remark}
It is possible to extend the results in this Section to automorphisms of Klein
surfaces with boundary orientable or not. The result obtained in that
situation is as follows: Let $f$ be an automorphism of prime order $p$ of a
bordered Klein surface. The automorphism $f$ is representable as a rotation if
and only if the rotation indices of fixed points of $f$ are all equal to
$2q\pi/p$, for some $q$ such that $0<q<p/2$.
\end{remark}

\section{Examples: visualizing two famous automorphisms of order seven.}

\subsection{The order seven automorphism of the Klein quartic}

In \cite{K2}, F. Klein introduces the a Riemann surface $S_{K}$ of genus $3$
with automophisms group isomorphic to $PSL(2,7)$. As the title of \cite{K2}
suggests the automorphisms of order seven of $S_{K}$ have a special importance
and Klein remarks that such an automorphism cannot be represented as the
restriction of a rotation on an embedded surface in the Euclidean space.

The surface $S_{K}$ is described as the quartic $x^{3}y+y^{3}z+z^{3}x=0$ with
the order seven automorphism $f$:

\begin{center}
$x\rightarrow\zeta x$, $y\rightarrow\zeta^{2}y$, $z\rightarrow\zeta^{4}z $,
where $\zeta=e^{2\pi i/7}$.
\end{center}

The orbit space $S_{K}/\left\langle f\right\rangle $ is the Riemann sphere and
$S_{K}\rightarrow S_{K}/\left\langle f\right\rangle $ is a cyclic covering
branched on three points, i. e. the automorphism $f$ has three fixed points:
$p_{1}$, $p_{2}$ and $p_{3}$. Let $D_{1}$, $D_{2}$ and $D_{3}$ be three
disjoint discs in $S_{K}$ around $p_{1},p_{2}$ and $p_{3}$ respectively and we
shall consider that $D_{i}$ is equivariant by the action of $f$. Assume that
the action of $f$ on $D_{1}$ is equivalent to the rotation $z\rightarrow\zeta
z$, on $D_{2}$ is equivalent to the rotation $z\rightarrow\zeta^{2}z$ and on
$D_{3}$ is equivalent to the rotation $z\rightarrow\zeta^{4}z$.

Now consider the bordered orientable Klein surface $B_{K}=S_{K}-$
$\overset{\circ}{D}_{2}\cup\overset{\circ}{D}_{3}$. Then $(B_{K},f)$ is
representable by a rotation, i. e., $B_{K}$ is isomorphic to a surface
$B_{K}^{\prime}$ that is embedded in the Euclidean space in such a way that
$f$ produces on $B_{K}^{\prime}$ the restriction of a rotation of order seven
$\rho$. The Figure 3 shows the surface $B_{K}^{\prime}$ and has been
constructed using the method in Section 2. Note that the surface in the Figure
3 is topologically equivalent to $(B_{K},f)$ and the rotation indices in the
fixed points and boundaries coincide.%

\begin{figure}
[ptb]
\begin{center}
\includegraphics[
height=3.2067in,
width=3.2508in
]%
{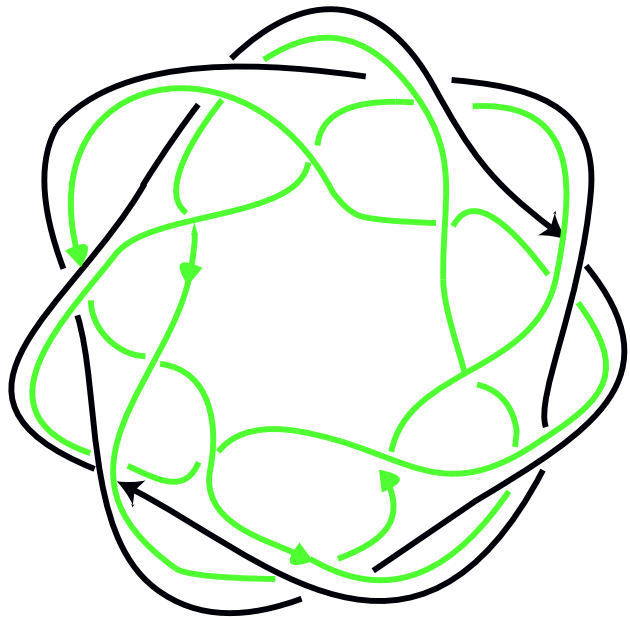}%
\caption{The order seven automorphism of the Klein quartic without two discs
embedded in the Euclidean space}%
\end{center}
\end{figure}

Consider $\mathbb{S}^{4}$ as two dimension 4 balls $B_{1}$, $B_{2}$ such that
$\partial B_{1}=\partial B_{2}$. The surface $B_{K}^{\prime}$ is inside the
tridimensional sphere $\partial B_{1}=\partial B_{2}$ in such a way that the
rotation giving the order seven automorphism is the restriction of a rotation
of $\mathbb{S}^{4}$. Let $C_{1}$ and $C_{2}$ be the boundary components of
$B_{K}$. The cone on $C_{1}$ from the center $c_{1}$ of $B_{1}$ and the cone
on $C_{2}$ from the center $c_{2}$ of $B_{2}$, produce a genus $3$ surface
$S_{K}^{\prime}$ immersed in $\mathbb{S}^{4}$. There is a rotation $r$ of
$\mathbb{S}^{4}$ leaving invariant $S_{K}^{\prime}$ and such that
$(S_{K}^{\prime},r)$ is topologically equivalent to $(S_{K},f)$.

Furthermore, since $S_{K}^{\prime}-\{c_{1},c_{2}\}/\left\langle r\right\rangle
$ is a sphere without two points then the conformal structure inherited from
$\mathbb{S}^{4}/\left\langle r\right\rangle $ is conformally equivalent to the
Riemann sphere without two points. Such structure of $S_{K}^{\prime}%
-\{c_{1},c_{2}\}/\left\langle r\right\rangle $ lifts in a unique equivariant
way to $S_{K}^{\prime}-\{c_{1},c_{2}\}$. Hence the surface $S_{K}^{\prime
}-\{c_{1},c_{2}\}$ with the conformal structure given by $\mathbb{S}^{4}$ is
conformally equivalent to the Klein quartic without two points, so in this
case is not necessary to deform the surface using \cite{Ko}.

\subsection{The order seven automorphism of the Wiman surface of genus $3$}

There is only one Riemann surface of genus $3$ and with an automorphism of
order $7$ different from the Klein quartic. This is the genus $3$ Wiman
surface $S_{A}$: $w^{2}=z^{7}-1$ \cite{W}. In this case the order seven
automorphism $f_{A}$ has also three fixed points: $p_{1}$, $p_{2}$ and $p_{3}%
$. Let $D_{1}$, $D_{2}$ and $D_{3}$ be three disjoint discs in $S_{A}$ around
$p_{1},p_{2}$ and $p_{3}$ respectively and $D_{i}$ is equivariant by the
action of $f_{A}$. The action of $f_{A}$ on $D_{1}$ is equivalent to the
rotation $z\rightarrow\zeta z$, on $D_{2}$ is equivalent to the rotation
$z\rightarrow\zeta z$ and on $D_{3}$ is equivalent to the rotation
$z\rightarrow\zeta^{5}z$. The surface $B_{A}=S_{A}-\overset{\circ}{D}_{3}$ is
homeomorphic to a surface $B_{A}^{\prime}$ that is embedded in the Euclidean
space in such a way that $f_{A}$ induces on $B_{A}^{\prime}$ the restriction
of a rotation: see the Figure 4. In fact $B_{A}$ is the Seifert surface of
minimal genus of the link of the singularity $w^{2}-z^{7}=0$, that is the
torus knot $(2,7)$.%

\begin{figure}
[ptb]
\begin{center}
\includegraphics[
height=4.3223in,
width=3.058in
]%
{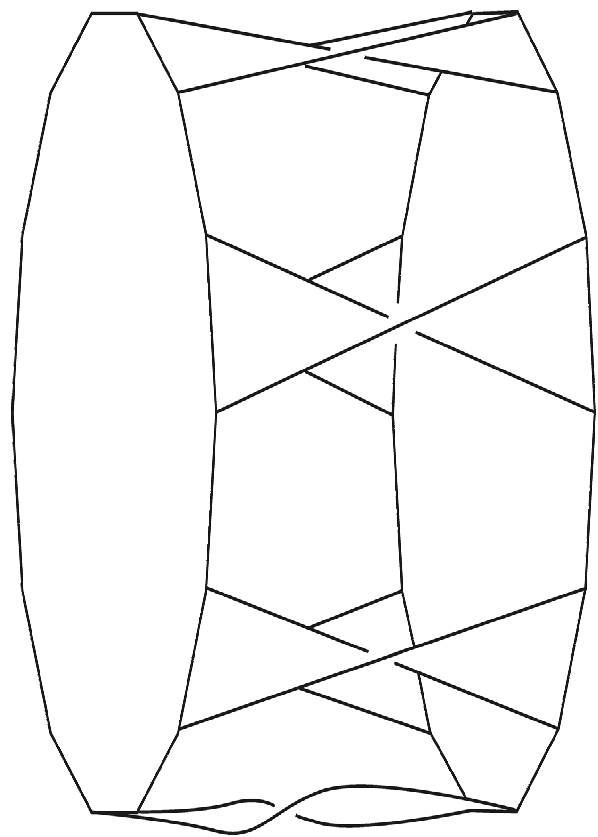}%
\caption{The order seven automorphism of the punctured Wiman surface embedded
in the Euclidean space}%
\end{center}
\end{figure}

Let $B$ be the unit ball of $\mathbb{R}^{4}$ with center in $0$. Consider
$B_{A}^{\prime}$ in the tridimensional sphere $\partial B$ in such a way that
the rotation giving the order seven automorphism is the restriction of a
rotation of $\mathbb{R}^{4}$. Let $C$ be the boundary of $B_{A}$. The cone on
$C$ from $0$, produces a genus $3$ surface $S_{A}^{\prime}$ immersed in
$\mathbb{R}^{4}$. There is a rotation $r$ of $\mathbb{R}^{4}$ leaving
invariant $S_{A}^{\prime}$ and such that $(S_{A}^{\prime},r)$ is topologically
equivalent to $(S_{A},f)$.

Furthermore, and since $S_{A}^{\prime}\rightarrow S_{A}^{\prime}/\left\langle
r\right\rangle $ has three branched points, the surface $S_{K}^{\prime}-\{0\}$
with the conformal structure induced by $\mathbb{R}^{4}$ is conformally
equivalent to the Wiman surface of genus $3$ without one point. Note that
there is an automorphism of order two $h$ in $S_{K}^{\prime}-\{0\}$ permuting
the two fixed points of the automorphism of order seven and with seven fixed
points. The automorphism $h$ can be represented also by a rotation indicating
that the surface $S_{K}$ is hyperelliptic and the product $h\circ r$ is the
Wiman automorphism of order $14$ of $S_{K}$.\bigskip

\textbf{Acknowlegments.} We wish to thank Le Fonds National Suisse de la
Recherche Scientifique for its support.

\end{document}